\numberwithin{equation}{section}
\numberwithin{figure}{section}
\newcommand\R{\mathbb{R}}
\newcommand\Z{\mathbb{Z}}
\newcommand\G{\mathbf{G}}
\newcommand\lam{\lambda}
\newcommand\Lam{\Lambda}
\newcommand\Om{\Omega}
\newcommand\1{\mathds{1}}
\renewcommand\leq{\leqslant}
\renewcommand\geq{\geqslant}
\newcommand\sbt{\subset}
\newcommand{\ft}[1]{\widehat #1}
\newcommand{\dotprod}[2]{\langle #1 , #2 \rangle}
\newcommand{\mes}{\operatorname{mes}}
\theoremstyle{plain}
\newtheorem{thm}{Theorem}[section]
\newtheorem{corollary}[thm]{Corollary}
\newtheorem{conj}[thm]{Conjecture}
\newtheorem*{claim*}{Claim}
\newcommand{\thmref}[1]{Theorem~\ref{#1}}
\newcommand{\secref}[1]{Section~\ref{#1}}
\newcommand{\conjref}[1]{Conjecture~\ref{#1}}
\theoremstyle{definition}
\newtheorem*{definition*}{Definition}
\newtheorem*{remarks*}{Remarks}
\newtheorem*{remark*}{Remark}
\newenvironment{enumerate-roman}
{\begin{enumerate}
\addtolength{\itemsep}{5pt}
}
{\end{enumerate}}
\newenvironment{enumerate-alph}
{\begin{enumerate}
\addtolength{\itemsep}{5pt}
}
{\end{enumerate}}
\newenvironment{enumerate-num}
{\begin{enumerate}
\addtolength{\itemsep}{5pt}
}
{\end{enumerate}}
\newenvironment{enumerate-text}
{\begin{enumerate}
\addtolength{\itemsep}{5pt}
}
{\end{enumerate}}
\begin{document}

\title{Gabor orthonormal bases, tiling and periodicity}

\author{Alberto Debernardi Pinos}
\address{Department of Mathematics, University of Aveiro, Aveiro 3810-193, Portugal}
\email{adebernardipinos@gmail.com}

\author{Nir Lev}
\address{Department of Mathematics, Bar-Ilan University, Ramat-Gan 5290002, Israel}
\email{levnir@math.biu.ac.il}

\date{December 2, 2021}
\subjclass[2010]{42B10, 42C15}
\keywords{Gabor systems, orthonormal bases, translational tiling}
\thanks{Research supported by ISF Grants No.\ 447/16, 227/17 and 1044/21 and 
ERC Starting Grant No.\ 713927.
The first author was also partially supported by Ministry of Education and 
Science of the Republic of Kazakhstan (AP08053326), and by The Center for 
Research \& Development in Mathematics and Applications, through the 
Portuguese Foundation for Science and Technology (UIDP/04106/2020).}

\begin{abstract}
We show that if the Gabor system $\{ g(x-t) e^{2\pi i s x}\}$, $t \in T$, $s \in S$, is an orthonormal basis  in $L^2(\mathbb{R})$ and if the window function $g$ is compactly supported, then both the time shift set $T$ and the frequency shift set $S$ must be periodic. To prove this we establish a necessary functional tiling type condition for Gabor orthonormal bases which may be of independent interest.
\end{abstract}

\maketitle


\section{Introduction} \label{secI1}

\subsection{}
Let $g$ be a function in $L^2(\R^d)$, and let $T$ and $S$ be two countable 
sets in $\R^d$. The system of functions
\begin{equation}
\label{eqA1.1}
	\G(g,T,S):=\{ g(x-t) e^{2\pi i\dotprod{s}{x}} : t \in T, s \in S\}
\end{equation}
is called the \emph{Gabor system generated by the window
function $g$, the time shift set $T$ and the frequency shift set $S$}.
Gabor systems have been extensively studied in the context of
signal processing and time-frequency analysis, see e.g. \cite{Chr16}.

There is a long-standing problem
concerning the structure 
of the Gabor systems \eqref{eqA1.1}
 that constitute an orthonormal basis in the space
$L^2(\R^d)$. It is generally expected that
orthonormal bases of the form \eqref{eqA1.1}
must have a certain rigid structure.
This problem has been studied by different
authors, see e.g.\  \cite{CCJ99}, \cite{Liu01}, \cite{LW03},
\cite{DL14}.

In particular, it was proved in \cite[Theorem 1.1]{LW03} that if 
$\G(g, T, S)$ is an ortho\-normal basis in $L^2(\R)$
 where the function $g$ has compact support,
and if the frequency shift set $S$ is periodic,
then the time shift set $T$ must be periodic as well.
In the present paper we improve this
 result by establishing that the periodicity
of both 
the time and the
 frequency shift sets 
$T$ and $S$ follows without  any a priori 
assumption on the structure of these sets. 
We  will prove the following:

\begin{thm}
\label{thmA3}
Let $\G(g, T, S)$ be an orthonormal basis in $L^2(\R)$,
where $g \in L^2(\R)$ has compact support.
Then both $T$ and $S$ must be periodic sets, i.e.\ they have
the form 
\begin{equation}
\label{eqA3.2}
T = a \Z + \{t_1, \dots, t_n\}, \quad
S = b \Z + \{s_1, \dots, s_m\}.
\end{equation}
\end{thm}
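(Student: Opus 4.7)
The plan is to first establish periodicity of $T$ through a functional tiling identity for $|g|^2$, and then to deduce periodicity of $S$ using the additional structure gained from $T$ being periodic.

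For the periodicity of $T$, the crucial step is to prove the pointwise tiling identity
\begin{equation*}
\sum_{t \in T} |g(x-t)|^2 = c \quad \text{for a.e.\ } x \in \R,
\end{equation*}
for some constant $c>0$. A partial version of this follows immediately from orthonormality: fixing $t \in T$ and varying $s,s' \in S$ in the inner products $\int |g(x-t)|^2 e^{2\pi i(s-s')x}\,dx$ shows that $\ft{|g|^2}$ vanishes on $(S-S)\setminus\{0\}$. To upgrade this to the full pointwise tiling identity one must exploit the completeness of the Gabor basis, tested against well-chosen localized functions such as characteristic functions of short intervals or bump functions centered at a generic point; this is presumably the ``necessary functional tiling type condition'' announced in the abstract. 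Once the tiling identity is in hand, the compact support of $|g|^2$ places us in the setting of the classical theorem of Lagarias--Wang (and related results of Leptin and Kolountzakis) asserting that any tiling of $\R$ by translates of a compactly supported function must be periodic; this forces $T = a\Z + \{t_1,\dots,t_n\}$.

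To then derive periodicity of $S$, the newly obtained periodicity of $T$ allows us to rewrite $\G(g,T,S)$ as a multi-window lattice Gabor orthonormal basis, with windows $g_j(x) := g(x-t_j)$, time lattice $a\Z$, and frequency set $S$. Applying the Zak transform with respect to $a\Z$ converts the orthonormality and completeness conditions into a family of unitarity conditions on the matrix-valued function $\bigl(Z_a g_j(x,\xi-s)\bigr)_{j,s}$ on the fundamental cell; since each $g_j$ is compactly supported, $Z_a g_j$ is a trigonometric polynomial in the frequency variable, and the resulting algebraic constraints should force $S$ to take the periodic form $b\Z + \{s_1,\dots,s_m\}$. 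A plausible alternative is to apply the Liu--Wang argument in the dual direction, with the lattice structure just obtained on $T$ playing the role that the structure of $S$ played there.

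I expect the principal obstacle to lie in establishing the tiling identity for $|g|^2$ without any a priori structural assumption on $S$. For non-lattice frequency sets the standard Gabor tight frame identities are unavailable, and a new argument combining the Fourier-side vanishing of $\ft{|g|^2}$ on $S-S$ with the compactness of $\supp g$ and with the completeness of the Gabor basis is required. Once this identity is secured, the rest of the proof rests on established machinery: the periodicity theorem for compactly supported tilings of $\R$, and a Zak-transform analysis of multi-window lattice Gabor orthonormal bases.
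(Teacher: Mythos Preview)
Your plan for the periodicity of $T$ matches the paper's approach. The paper's \thmref{thmD1.4.3} is precisely the tiling identity $\sum_{t\in T}|g(x-t)|^2=D(T)$ a.e., proved for arbitrary $T,S$ in $\R^d$ via Parseval and a density argument of Kolountzakis--Lagarias; then the Kolountzakis--Lagarias structure theorem for compactly supported tilers, plus uniform discreteness of $T$, gives the periodic form of $T$.

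For $S$, however, your proposal has a genuine gap, and the paper takes an entirely different route. Your Zak-transform sketch stops at ``the resulting algebraic constraints should force $S$ to take the periodic form'', which is exactly the hard part; no mechanism is offered for why a matrix of trigonometric polynomials satisfying pointwise unitarity relations over an \emph{a priori} unstructured set $S$ must force $S$ to be periodic. Your alternative of running the Liu--Wang argument in the dual direction also does not transfer: Liu--Wang assume compact support of $g$, and when you swap the roles of $T$ and $S$ the relevant window becomes $\ft{g}$, which is analytic and hence certainly not compactly supported.

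The paper instead exploits the \emph{second} tiling identity from \thmref{thmD1.4.3}, namely $\sum_{s\in S}|\ft{g}(\xi-s)|^2=D(S)$ a.e. Since $\ft{g}$ has unbounded support, the compactly-supported tiling theorem does not apply; the paper circumvents this by showing that $S$ has \emph{finite local complexity}. Indeed, for any fixed $t\in T$ the orthogonality of $\{g(x-t)e^{2\pi i s x}:s\in S\}$ forces $(S-S)\setminus\{0\}$ to lie in the real zero set of the Fourier transform of $|g|^2$; since $|g|^2$ is compactly supported, this Fourier transform extends to a nonzero entire function, so its real zeros form a discrete closed set. Together with bounded gaps (from $D(S)>0$), this gives finite local complexity of $S$. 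The periodicity of $S$ then follows from the Iosevich--Kolountzakis/Kolountzakis--Lev theorem that any $L^1(\R)$ tiling with a translation set of finite local complexity is periodic. Note that this argument for $S$ is independent of the periodicity of $T$; the two conclusions are derived in parallel from the two tiling identities, not sequentially.
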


The result is not true in dimensions two and higher,
see e.g.\ \cite[Example 3.1]{LW03}.

\subsection{}
Recall that a set $\Lam \sbt \R^d$ is said to be
\emph{uniformly distributed} if there exists
a number $D(\Lam)$ satisfying
 \begin{equation}
\label{eqA5.1}
\frac{\#(\Lambda\cap (x + [0,r]^d))}{r^d} =  D(\Lam)  + o(1), \quad r \to +\infty
 \end{equation}
uniformly with respect to $x \in \R^d$.
In this case, $D(\Lam)$ is called the
\emph{uniform density} of   $\Lam$.

It is known, see e.g.\ \cite[Lemma 2.2]{LW03}, that
 if $\G(g, T, S)$ is an orthonormal basis in $L^2(\R^d)$
then both  $T$ and $S$
must  be
uniformly distributed sets, and moreover
their uniform densities are positive and finite
numbers satisfying the relation
\begin{equation}
\label{eqD1.4.9}
D(T) D(S) = 1.
\end{equation}

Our approach to the proof of \thmref{thmA3}
is based on the  following  result,
which will also be proved in this paper
and which may be of independent interest:

\begin{thm}
\label{thmD1.4.3}
For any orthonormal basis $\G(g, T, S)$ in $L^2(\R^d)$ we have
\begin{equation}
\label{eqD1.4.1}
\sum_{t \in T} |g(x-t)|^2 = D(T) \quad \text{a.e.}
\end{equation}
and
\begin{equation}
\label{eqD1.4.2}
\sum_{s \in S} |\ft{g}(\xi-s)|^2 = D(S) \quad \text{a.e.}
\end{equation}
\end{thm}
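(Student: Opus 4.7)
The starting point is the Parseval identity for the orthonormal basis $\G(g,T,S)$: for every $f \in L^2(\R^d)$,
\begin{equation*}
\|f\|^2 = \sum_{t \in T}\sum_{s \in S} \bigl|\dotprod{f}{g(\cdot - t) e^{2\pi i \dotprod{s}{\cdot}}}\bigr|^2.
\end{equation*}
The plan is to apply this to the one-parameter family $f(x) = h(x) e^{2\pi i \dotprod{\xi}{x}}$, where $h \in L^2(\R^d)$ is bounded and compactly supported and $\xi \in \R^d$ is a free parameter. A short computation identifies the inner product with $\ft{h_t}(s-\xi)$, where $h_t(x) := h(x) \overline{g(x-t)}$, so Parseval becomes
\begin{equation*}
\|h\|^2 = \sum_{t \in T} \sum_{s \in S} |\ft{h_t}(s - \xi)|^2 \quad \text{for every } \xi \in \R^d.
\end{equation*}

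The next step is to average in $\xi$ over a large cube $Q_R := [-R/2, R/2]^d$ and divide by $R^d$. Tonelli's theorem allows the exchange of summation and integration, and after the substitution $\eta = s - \xi$ the sum over $s$ becomes the counting factor $\#(S \cap (\eta + Q_R))/R^d$. By the uniform distribution of $S$ this factor converges to $D(S)$ uniformly in $\eta$ and stays uniformly bounded. The uniform lower estimate forces $\Psi(\eta) := \sum_{t \in T} |\ft{h_t}(\eta)|^2$ to lie in $L^1(\R^d)$, after which dominated convergence yields $D(S) \int \Psi = \|h\|^2$. Plancherel then rewrites $\|\ft{h_t}\|^2 = \int |h|^2 |g(\cdot - t)|^2$, and exchanging sum with integral once more turns the identity into
\begin{equation*}
D(S) \int |h(x)|^2 \Bigl(\sum_{t \in T} |g(x-t)|^2\Bigr) dx = \int |h(x)|^2 dx.
\end{equation*}
Specializing $h$ to indicator functions of bounded measurable sets forces $\sum_t |g(x-t)|^2 = 1/D(S) = D(T)$ a.e., where the last equality uses the density relation $D(T)D(S) = 1$; this is \eqref{eqD1.4.1}.

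For the dual identity \eqref{eqD1.4.2} I would invoke Fourier symmetry. A direct computation gives
\begin{equation*}
\ft{\bigl[g(\cdot - t) e^{2\pi i \dotprod{s}{\cdot}}\bigr]}(\xi) = e^{2\pi i \dotprod{s}{t}}\, \ft{g}(\xi - s)\, e^{-2\pi i \dotprod{t}{\xi}},
\end{equation*}
so each Fourier-transformed atom equals a unimodular constant times a Gabor atom of the system $\G(\ft g, S, -T)$. Since the Fourier transform is unitary and unimodular scalars preserve orthonormality, $\G(\ft g, S, -T)$ is again an orthonormal basis in $L^2(\R^d)$, now with time density $D(S)$ and frequency density $D(-T) = D(T)$. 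Applying \eqref{eqD1.4.1} to this system gives \eqref{eqD1.4.2} immediately. The only real obstacle I foresee is the integrability step in the middle paragraph: a priori $\sum_t |g(x-t)|^2$ need not be locally integrable, so one has to extract enough $L^1$-control on $\Psi$ from the $\xi$-invariance of the Parseval sum before dominated convergence becomes legal.
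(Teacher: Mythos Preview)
Your argument is correct and follows the same overall architecture as the paper: start from Parseval, modulate the test function by $e^{2\pi i\dotprod{\xi}{x}}$ to obtain the identity $\sum_{s\in S}H(\xi-s)=1$ with $H(\xi)=\|f\|^{-2}\sum_t|\ft{h_t}(\xi)|^2$, deduce that $\int H=1/D(S)=D(T)$, and finish via Plancherel and a density argument; the dual identity then follows by Fourier symmetry.

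The one genuine difference lies in how the value of $\int H$ is extracted. The paper observes that the identity $\sum_{s}H(\xi-s)=1$ is literally a tiling of $\R^d$ by translates of $H$ along $S$, and then invokes a structural result of Kolountzakis--Lagarias (\thmref{thmE1.1}) to conclude $\int H=1/D(S)$. You instead average the Parseval identity over $\xi\in Q_R$, convert the sum over $s$ into the counting factor $\#(S\cap(\eta+Q_R))/R^d$, and let $R\to\infty$ using the uniform distribution of $S$; the uniform lower bound on the counting factor gives the $L^1$-control on $\Psi$ that you correctly flag as the only delicate point, after which dominated convergence is legitimate. Your route is more self-contained (it avoids the external reference and in effect reproves the needed special case of the Kolountzakis--Lagarias theorem), while the paper's route makes the tiling interpretation explicit and ties the result into the existing literature on translational tilings. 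Both yield the same conclusion with comparable effort. Your Fourier-side computation is also slightly more careful than the paper's: the transformed system is $\G(\ft g,S,-T)$ rather than $\G(\ft g,S,T)$, though of course this is immaterial for the conclusion since only $D(-T)=D(T)$ enters.
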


Here $\ft{g}$ is 
 the Fourier transform of $g$
defined by   $\ft{g}(\xi) = \int g(x) \exp(-2 \pi i \dotprod{\xi}{x}) dx$.

The conclusions
\eqref{eqD1.4.1} and
\eqref{eqD1.4.2} 
can be stated in terms of
  translational tilings,
i.e.\ the function
$|g|^2$ tiles $\R^d$ at a constant level $D(T)$
by translations 
 with respect to the set $T$,
 while  $|\ft{g}|^2$  tiles at level $D(S)$ with respect to   $S$.
For an account on the theory of tiling by translates of a function
we refer the reader to \cite{KL21} and the references therein.

In dimension $d=1$, the conclusion 
\eqref{eqD1.4.1}
was obtained in \cite[Theorem 3.2]{CCJ99}
in the special case where both  $T$ and $S$ are 
arithmetic progressions. 
In  \cite[Theorem 4.4]{Cas04} this result
was extended to the case where 
$S$ is an arithmetic progression, but
$T$ is an arbitrary set in $\R$. \thmref{thmD1.4.3}
establishes that both
\eqref{eqD1.4.1} and
\eqref{eqD1.4.2} are valid in all
dimensions $d$ and without imposing any a priori
 assumptions on the structure of the time or the frequency shift sets 
$T$ and $S$.

As an immediate consequence of \thmref{thmD1.4.3} we obtain:

\begin{corollary}
\label{corD1.7}
If $\G(g, T, S)$ is an orthonormal basis  in $L^2(\R^d)$
then both $g$ and $\ft{g}$ must belong to $L^\infty(\R^d)$,
 and moreover we have
\begin{equation}
\label{eqD1.3}
\|g\|_{L^\infty(\R^d)} \leq \sqrt{D(T)}, \qquad  
\|\ft{g}\|_{L^\infty(\R^d)} \leq \sqrt{D(S)}.
\end{equation}
\end{corollary}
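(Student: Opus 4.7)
The plan is to deduce the $L^\infty$ bounds directly from \thmref{thmD1.4.3}, exploiting the positivity of the individual summands.

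First, I would invoke \eqref{eqD1.4.1}: for a.e.\ $x \in \R^d$, the series $\sum_{t \in T} |g(x-t)|^2$ converges and equals the constant $D(T)$. Since every term of this series is nonnegative, each individual term is bounded by the total sum. Thus, for a.e.\ $x$ and every $t \in T$, we have $|g(x-t)|^2 \leq D(T)$. Fixing any single $t_0 \in T$ (such an element exists because $D(T) > 0$) and performing the change of variables $y = x - t_0$, we conclude that $|g(y)|^2 \leq D(T)$ for a.e.\ $y \in \R^d$. This yields $g \in L^\infty(\R^d)$ with $\|g\|_{L^\infty(\R^d)} \leq \sqrt{D(T)}$.

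The bound for $\ft{g}$ follows by the identical argument applied to \eqref{eqD1.4.2} in place of \eqref{eqD1.4.1}, fixing an arbitrary element $s_0 \in S$ and translating.

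There is essentially no obstacle here — the corollary is an immediate consequence of \thmref{thmD1.4.3}. The only mild subtlety is the logical order of quantifiers: the assertion of \thmref{thmD1.4.3} is that there exists a set $E \sbt \R^d$ of full measure on which the identity holds, and one must observe that on this same set $E$ the termwise bound is valid simultaneously for every $t \in T$, before translating by a fixed $t_0$.
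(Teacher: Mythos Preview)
Your argument is correct and is exactly the immediate deduction the paper intends: the paper gives no separate proof of the corollary, merely introducing it as ``an immediate consequence of \thmref{thmD1.4.3}'', and your termwise bound from the nonnegative tiling identities \eqref{eqD1.4.1} and \eqref{eqD1.4.2} is precisely that consequence.
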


The proofs of Theorems
\ref{thmA3}  and  \ref{thmD1.4.3}
are given in Sections   \ref{sec:GONBT} and
 \ref{sec:GONBP} below.
In the last \secref{secR1} we give,
as another application of \thmref{thmD1.4.3}, a new
proof of a result obtained in \cite{DL14} on 
 the structure of the Gabor ortho\-normal bases
$\G(g, T, S)$ 
with a nonnegative window function $g$.

\maketitle


\section{Gabor orthonormal bases and tiling}
\label{sec:GONBT}

In this section we prove \thmref{thmD1.4.3}.
We assume that  $g \in L^2(\R^d)$,  that
$T, S$ are two countable 
sets in $\R^d$, and that the system $\G(g,T,S)$
is an orthonormal basis in $L^2(\R^d)$.
 We will show that conditions
\eqref{eqD1.4.1} and \eqref{eqD1.4.2} must be satisfied.

\begin{proof}[Proof of  \thmref{thmD1.4.3}]
Since the system \eqref{eqA1.1}
is an orthonormal basis in $L^2(\R^d)$
then, by Parseval's equality, for any $f \in L^2(\R^d)$ we   have
\begin{equation}
\label{eqD2.1}
\sum_{s \in S} \sum_{t \in T} \Big|
\int_{\R^d} f(x) \, \overline{g(x-t)} \, e^{-2 \pi i \dotprod{s}{x}} dx
\Big|^2 = \|f\|^2.
\end{equation}
Fix the function $f$ and for each $t \in T$ define the function
\begin{equation}
\label{eqD2.1.5}
h_t(x) := f(x) g(x-t)
\end{equation}
which is in $L^1(\R^d)$.
By applying \eqref{eqD2.1} to the function 
$\overline{f(x)} \, e^{2 \pi i \dotprod{\xi}{x}}$ in place of
$f(x)$, we obtain
\begin{equation}
\label{eqD2.2}
\sum_{s \in S} \sum_{t \in T} 
| \ft{h}_t (\xi-s) |^2 = \|f\|^2, \quad
\xi \in \R^d.
\end{equation}
In what follows we will assume that $f$ is not
the zero function.  Define
\begin{equation}
\label{eqD2.3}
H(\xi) :=  \|f\|^{-2} \sum_{t \in T} 
| \ft{h}_t (\xi) |^2, 
\end{equation}
then the condition \eqref{eqD2.2} can be
stated as 
\begin{equation}
\label{eqD2.3.5}
\sum_{s \in S} H(\xi-s) = 1, \quad \xi \in \R^d.
\end{equation}
In other words, the function $H$ tiles $\R^d$
at a constant level $1$ by translations with respect to the set 
$S$. We will now use the following known result:

\begin{thm}[{\cite{KL96}}]
\label{thmE1.1}
Let $H$ be a nonnegative measurable function on $\R^d$,
 and let $S$ be a countable set in $\R^d$.
If the tiling condition \eqref{eqD2.3.5} holds
then $S$ has a finite uniform density satisfying $D(S) = (\int H)^{-1}$.
\end{thm}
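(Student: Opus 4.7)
The plan is a double-counting argument based on integrating the tiling identity over a box and carefully controlling boundary contributions. Write $Q_{x,r} := x+[0,r]^d$ and $N(r,y) := \#(S\cap(y+[0,r]^d))$. Since every term is nonnegative, Tonelli applied to the hypothesis yields the master identity
\begin{equation*}
r^d \;=\; \int_{Q_{x,r}}\sum_{s\in S} H(\xi-s)\,d\xi \;=\; \sum_{s\in S}\int_{Q_{x,r}-s} H \;=\; \int_{\R^d} H(\eta)\,N(r, x-\eta)\,d\eta,
\end{equation*}
valid for every $x\in\R^d$ and $r>0$. Set $I := \int H$ and $I_R := \int_{[-R,R]^d} H$, so that $I_R \uparrow I\in(0,\infty]$; positivity $I>0$ is automatic, since otherwise $H\equiv 0$ a.e.

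I would first rule out $I=+\infty$ and extract an upper bound on the upper Banach density $D^*(S):=\lim_{r\to\infty}\sup_x N(r,x)/r^d$. Retaining in the sum only those $s$ in the ``deep interior'' $x+[R,r-R]^d$ (for which $Q_{x,r}-s\supset[-R,R]^d$) gives
\begin{equation*}
r^d \;\ge\; I_R\cdot\#\bigl(S\cap(x+[R,r-R]^d)\bigr).
\end{equation*}
If $I=+\infty$ then $I_R>r^d$ for $R$ large, forcing that intersection to be empty for every $x$ and every $r>2R$; letting $r\to\infty$ would contradict $S\neq\emptyset$. The same inequality gives $D^*(S)\le 1/I_R$ for each $R$, hence $D^*(S)\le 1/I$. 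Two further bounds come for free from the master identity: estimating $N(r,x-\eta)$ by $\sup_y N(r,y)$ or by $\inf_y N(r,y)$ shows $\sup_y N(r,y)\ge r^d/I$ and $\inf_y N(r,y)\le r^d/I$, so $D_*(S)\le 1/I\le D^*(S)$. Combined with the previous step this already pins down $D^*(S)=1/I$.

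The main obstacle is the final inequality $D_*(S)\ge 1/I$, because it requires controlling the contribution of \emph{all} $s\in S$, including those far from $Q_{x,r}$. I would split the sum into ``near'' $s\in x+[-R,r+R]^d$ (each term $\le I$) and ``far'' $s$ outside this box (each term $\le I-I_R$, since then $Q_{x,r}-s$ is disjoint from $[-R,R]^d$); the collective far contribution is handled by a second Tonelli swap,
\begin{equation*}
\sum_{s\text{ far}}\int_{Q_{x,r}-s} H \;\le\; \int_{\R^d\setminus[-R,R]^d} H(\eta)\,\#\bigl(S\cap(Q_{x,r}-\eta)\bigr)\,d\eta \;\le\; (I-I_R)\sup_y N(r,y).
\end{equation*}
This yields $r^d \le I\cdot\#(S\cap(x+[-R,r+R]^d)) + (I-I_R)\sup_y N(r,y)$. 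Dividing by $(r+2R)^d$, passing $r\to\infty$ (using the already-established $\sup_y N(r,y) \sim r^d/I$), and finally letting $R\to\infty$, I obtain $D_*(S)\ge 1/I$. Hence $D_*(S)=D^*(S)=1/\int H$, so $S$ is uniformly distributed with density $1/\int H$, as claimed.
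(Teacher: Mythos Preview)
The paper does not actually prove this theorem; it quotes it from \cite{KL96} and adds a parenthetical remark that the full version stated here (arbitrary dimension, no a priori integrability assumption on $H$, and uniform rather than ordinary density) follows by ``appropriate adjustments'' to that argument. Your double-counting approach is precisely in the spirit of \cite{KL96}, and the main steps are sound: the master identity $r^d=\int H(\eta)\,N(r,x-\eta)\,d\eta$, the bound $\sup_y N(r-2R,y)\le r^d/I_R$ obtained by restricting to deep-interior translates, the sandwich $\inf_y N(r,y)\le r^d/I\le\sup_y N(r,y)$ coming directly from the master identity, and the near/far splitting giving $D_*(S)\ge I_R/I^2\to 1/I$ all work as written.

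There is one genuine slip. Your argument ``ruling out'' $I=+\infty$ is circular: the inequality $r^d\ge I_R\cdot\#\bigl(S\cap(x+[R,r-R]^d)\bigr)$ holds for a \emph{fixed} $R$ and all $r>2R$, so once you let $r\to\infty$ you can no longer have $I_R>r^d$. And in fact $I=+\infty$ cannot be excluded: take $H\equiv 1$ and $S=\{0\}$, which satisfies \eqref{eqD2.3.5} with $\int H=\infty$ and $D(S)=0$. The repair is painless. The tiling condition forces $H\le 1$ a.e., so each $I_R$ is finite; your bound $\limsup_{r}\sup_y N(r,y)/r^d\le 1/I_R$ for every $R$ then already gives $D^*(S)=0$ when $I=\infty$, hence $D(S)=0=1/I$, and the near/far step (which indeed requires $I<\infty$ to make sense of $I-I_R$) is simply not needed in that case. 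With this small correction the argument is complete.
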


(Strictly speaking, 
this was proved in 
\cite[Section 2]{KL96} in dimension $d=1$,
under the extra assumption
that $H \in L^1(\R)$,
and for a weaker notion of 
density (i.e.\ not the uniform density).
The full statement of \thmref{thmE1.1}
can be  established by appropriate adjustments to
the proof in \cite{KL96}.)

In the present case the frequency shift set $S$ is known to have
positive uniform density, that is, $D(S) > 0$.
Hence \thmref{thmE1.1} implies
that the function $H$ must be in $L^1(\R^d)$. Moreover, 
using \eqref{eqD1.4.9} we conclude that
 $\int H =  D(T)$.

  On the other hand,
using
\eqref{eqD2.1.5}, \eqref{eqD2.3} and Plancherel's equality we obtain
\begin{align}
\|f\|^{2} \int_{\R^d}  H(\xi) d\xi &=
 \sum_{t \in T} \int_{\R^d} | \ft{h}_t (\xi) |^2 d\xi
= \sum_{t \in T}  \int_{\R^d} | h_t(x) |^2 dx\\
&= \int_{\R^d} |f(x)|^2 \sum_{t \in T}   | g(x-t) |^2 dx.
\end{align}
We thus arrive at the equality
\begin{equation}
\int_{\R^d} |f(x)|^2 \sum_{t \in T}   | g(x-t) |^2 \, dx
= D(T) \|f\|^{2}
\end{equation}
which holds for an arbitrary nonzero $f \in L^2(\R^d)$.
This implies that 
\eqref{eqD1.4.1} must be true.

Next we use the fact that the Fourier transform maps
an orthonormal basis in $L^2(\R^d)$ onto
another orthonormal basis,  from which it follows that
the system 
 $\G(\ft{g}, S, T)$ is also an ortho\-normal basis in $L^2(\R^d)$.
So we may apply the same considerations above   to the system
$\G(\ft{g},S,T)$ in place of $\G(g,T,S)$, which
implies that  \eqref{eqD1.4.2} must also be true
and completes
the proof of \thmref{thmD1.4.3}.
\end{proof}


\section{Gabor orthonormal bases and periodicity}
\label{sec:GONBP}

Next we prove \thmref{thmA3}, which states that
if the system $\G(g, T, S)$ is an ortho\-normal basis in $L^2(\R)$
and the function
$g$ has compact support, then 
both $T$ and $S$ must be periodic sets.
This result is true only in dimension $d=1$ and 
does not extend to higher dimensions,
see e.g.\ \cite[Example 3.1]{LW03}.

\begin{proof}[Proof of \thmref{thmA3}]
First we establish the  periodicity of the time shift 
set $T$. Define the function $\varphi(x) := D(T)^{-1} |g(x)|^2$
which is in $L^1(\R)$.
Due to \thmref{thmD1.4.3} we know that
condition \eqref{eqD1.4.1} must hold, i.e.\ $\varphi$
 tiles  $\R$ at a constant level $1$
by translations with respect to the set $T$.
We now  invoke another result from \cite{KL96}:

\begin{thm}[{\cite{KL96}}]
\label{thmE2.1}
Let $\varphi$ be  a nonnegative, compactly supported
 function in $L^1(\R)$. If $\varphi$ tiles at level one by translations
with respect to some set $T \sbt \R$
then $T$ must be a disjoint
union of  finitely many arithmetic progressions, 
namely
\begin{equation}
\label{eqE2.2}
T = \biguplus_{j=1}^{N}
(a_j \Z + b_j)
\end{equation}
where $a_j, b_j$ are real numbers and
$a_j>0$.
\end{thm}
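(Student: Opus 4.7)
The approach is Fourier-analytic. Let $\mu_T := \sum_{t \in T} \delta_t$ denote the counting measure on $T$, viewed as a tempered distribution on $\R$. The tiling hypothesis is precisely the convolution identity $\varphi * \mu_T = 1$ in $\mathcal{S}'(\R)$, and taking distributional Fourier transforms yields $\hat{\varphi}(\xi) \cdot \hat{\mu}_T(\xi) = \delta_0$; this makes sense because compact support of $\varphi$ forces $\hat{\varphi}$ to extend to an entire function, in particular $C^\infty$ on $\R$, so the product with a tempered distribution is defined. As $\hat{\varphi}$ is entire of exponential type, its real zero set is discrete and locally finite, and the functional identity forces $\supp \hat{\mu}_T \sbt \{0\} \cup (\zeros(\hat{\varphi}) \cap \R)$. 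Thus the distributional spectrum of $\mu_T$ is itself a discrete subset of $\R$.

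Before the main step I would verify that $T$ is uniformly discrete, so that $\mu_T$ is a translation-bounded positive measure. Since $\varphi \geq 0$ is not identically zero, there exist $c > 0$ and a set $E \sbt \R$ of positive measure on which $\varphi \geq c$. The inequality $1 \geq c \cdot \#\{t \in T : x - t \in E\}$, valid for a.e.\ $x$, together with the observation that any cluster of $k$ points of $T$ inside a sufficiently short interval produces a common intersection of $k$ translates of $E$ of positive measure, shows that any interval of length below some threshold contains at most $\lceil 1/c \rceil$ elements of $T$.

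The heart of the argument, and the main obstacle, is to strengthen discreteness of $\supp \hat{\mu}_T$ to periodicity: one must show $\supp \hat{\mu}_T \sbt \tfrac{1}{p}\Z$ for some $p > 0$. Here the full entire-function structure of $\hat{\varphi}$ — not merely the discreteness of its zeros — is exploited in an essential way, via Jensen-type bounds on the density and distribution of the zeros of $\hat\varphi$ (coming from exponential type), combined with positivity and translation-boundedness of $\mu_T$. The upshot is a rigidity phenomenon of the sort systematically studied in the theory of Fourier quasicrystals: a positive, translation-bounded measure on $\R$ whose distributional Fourier transform is supported on a discrete set of frequencies must in fact have those frequencies lying inside a single arithmetic progression.

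Once $\supp \hat{\mu}_T \sbt \tfrac{1}{p}\Z$ is established, $\mu_T$ is invariant under translation by $p$, that is, $T = T + p$. Finite uniform density of $T$, which follows from \thmref{thmE1.1} applied with $H := \varphi / \int \varphi$, then forces $T$ to have only finitely many elements in any fundamental domain $[0,p)$, yielding the decomposition
\begin{equation}
T = \biguplus_{j=1}^{N}(p\Z + b_j),
\end{equation}
which is a special case of the form \eqref{eqE2.2} (all moduli $a_j$ equal to $p$; distinct $a_j$ in \eqref{eqE2.2} arise merely by regrouping cosets of $p\Z$).
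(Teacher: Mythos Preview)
Your opening moves are exactly those of the Kolountzakis--Lagarias argument in \cite{KL96}: rewrite the tiling as $\varphi * \mu_T = 1$, take Fourier transforms, and use that $\hat\varphi$ is entire of exponential type to conclude that $\supp\hat{\mu}_T$ is contained in the discrete set $\{0\}\cup(\zeros(\hat\varphi)\cap\R)$. (Note, incidentally, that the paper under review does not itself prove \thmref{thmE2.1}; it quotes it from \cite{KL96}.)

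The gap is in the ``heart of the argument.'' The rigidity principle you invoke --- that a positive, translation-bounded measure on $\R$ whose distributional Fourier transform has discrete support must have that support contained in a single arithmetic progression $\tfrac{1}{p}\Z$ --- is false. A counterexample is $\mu = \mu_{\Z} + \mu_{\sqrt{2}\,\Z}$, whose Fourier transform is supported on the discrete set $\Z\cup\tfrac{1}{\sqrt{2}}\Z$, which lies in no lattice. More to the point, the conclusion you draw, that $T$ itself must be periodic, is strictly stronger than \thmref{thmE2.1} and is \emph{not true}: take $\varphi=(1+\sqrt{2})^{-1}\,\1_{[0,1]}*\1_{[0,\sqrt{2}]}$, whose Fourier transform vanishes on $(\Z\setminus\{0\})\cup(\tfrac{1}{\sqrt{2}}\Z\setminus\{0\})$. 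Then $\varphi$ is nonnegative, compactly supported, and tiles at level one with $T=\Z\uplus(\sqrt{2}\,\Z+\tfrac12)$, a disjoint union of two arithmetic progressions with irrational ratio of moduli; this $T$ is not periodic (its difference set $T-T$ is dense in $\R$). So your final parenthetical, that ``distinct $a_j$ arise merely by regrouping cosets of $p\Z$,'' is a misconception: incommensurable moduli genuinely occur. This is also why the present paper, after applying \thmref{thmE2.1}, still needs a separate step (uniform discreteness of $T$ plus Kronecker's theorem) to force the $a_j$ to be commensurable and hence $T$ periodic.

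What \cite{KL96} actually proves from the discreteness of $\supp\hat{\mu}_T$ is weaker and correct: using that the real zeros of an entire function of exponential type have density bounded by the type (so $\supp\hat{\mu}_T$ has bounded upper density), together with positivity and translation-boundedness of $\mu_T$, one shows that $\hat{\mu}_T$ is a measure and that $\mu_T$ decomposes as a finite sum of Dirac combs on arithmetic progressions $a_j\Z+b_j$, with the $a_j$ not assumed commensurable. That is the form \eqref{eqE2.2}. Your outline needs to aim for this conclusion rather than for periodicity.
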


In the present case the 
function $g$ has compact support, and hence
the same is true for $\varphi$ and
\thmref{thmE2.1} applies. We conclude
that the time shift set $T$ is of the form \eqref{eqE2.2}.
On the other hand it is well-known that $T$ must be
a \emph{uniformly discrete} set, i.e.\ there is $\delta>0$
such that $|t' - t| \geq \delta$ for any two distinct
points $t, t' \in T$. (This follows from the
orthogonality of the two functions $g(x-t)e^{2 \pi i sx}$
and $g(x-t')e^{2 \pi i sx}$, where $s$ is any
element of the frequency shift set $S$.)
Hence using Kronecker's theorem it follows 
that all the ratios $a_i / a_j$ must be
rational numbers. In turn this implies that $T$
is a periodic set with the structure
as in \eqref{eqA3.2} (see e.g.\ \cite[p.\ 670]{KL96}).

Next we establish the  periodicity of the frequency shift set $S$. 
For this purpose we will use the condition
\eqref{eqD1.4.2} from \thmref{thmD1.4.3},
which can be stated by saying that the function
 $\psi(\xi) := D(S)^{-1} |\ft{g}(\xi)|^2$
tiles $\R$ at level $1$ by translations
with respect to  $S$.  We observe however that 
\thmref{thmE2.1} does not apply to this tiling
since the Fourier transform $\ft{g}$ is not a
 compactly supported function, being  analytic 
 (this can be seen as a version
of the classical uncertainty principle).
We also note that there do exist non\-periodic
tilings of $\R$
by translates of a nonnegative   $L^1(\R)$ function 
of unbounded support, see \cite{KL16}.

 To address this issue we will use a different result obtained in 
 \cite{IK13, KL16}. To state the result we 
first recall that a set $S \sbt \R$ is said to have
\emph{finite local complexity} if $S$ can be 
enumerated as a sequence $\{s_n\},$ $n\in \Z$, such that $s_n<s_{n+1}$ and the successive differences $s_{n+1} - s_n$ take only finitely many different values.
The following result establishes that translational tilings
of finite local complexity
must be periodic, even if the function which tiles does not have
 compact support:

\begin{thm}[{\cite{IK13}, \cite{KL16}}]
\label{thmKL16FLC}
Let $S \sbt \R$ have finite local complexity.
If a function $\psi \in L^1(\R)$ tiles
at level one by translations with respect to $S$, then $S$
must be a periodic set, namely, it has the form $S = b\Z + \{s_1, \dots, s_m\}$.
\end{thm}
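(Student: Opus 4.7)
The plan is to move to the Fourier side, where the tiling identity becomes a support constraint on the Fourier transform of the Dirac comb of $S$, and then to use finite local complexity to upgrade this constraint to periodicity.

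Set $\mu := \sum_{s \in S} \delta_{s}$. Finite local complexity forces $S$ to be uniformly discrete and relatively dense, so $\mu$ is a translation-bounded Borel measure and a tempered distribution with a well-defined $\hat{\mu}$. Rewriting the tiling identity as $\psi * \mu = 1$ and Fourier-transforming distributionally gives
\begin{equation}
\hat{\psi}(\xi)\, \hat{\mu}(\xi) = \delta_{0},
\end{equation}
so the support of $\hat{\mu}$ lies in the closed set $Z := \{\hat{\psi} = 0\} \cup \{0\}$. I would then introduce the autocorrelation (Patterson) measure
\begin{equation}
\gamma := \lim_{R \to \infty} \frac{1}{2R} \bigl(\mu|_{[-R, R]}\bigr) * \bigl(\widetilde{\mu|_{[-R, R]}}\bigr),
\end{equation}
whose existence as a positive, translation-bounded measure on the locally finite set $S - S$ is guaranteed by finite local complexity (after passing to a subsequence if necessary). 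Its Fourier transform $\hat{\gamma}$ is then a positive tempered measure supported on $Z$, with a Dirac mass at $0$ of weight $D(S)^{2}$ coming from the density of $S$.

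The crux is to use this spectral information to show that $\hat{\gamma}$ is purely atomic and that its atoms lie in a discrete subgroup $b^{-1}\Z$ of $\R$. Once this is known, a crystal-recognition argument, in the spirit of the one used in the proof of \thmref{thmA3} to pass from \eqref{eqE2.2} to \eqref{eqA3.2}, identifies $\mu$ as a sum of Dirac masses on a finite union of cosets of $b\Z$, which is exactly the required conclusion $S = b\Z + \{s_{1},\dots,s_{m}\}$.

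The main obstacle is precisely this spectral structure step. Ruling out a continuous component of $\hat{\gamma}$ relies on the fact that $\hat{\psi} \in C_{0}(\R)$, so that the support $Z$ is ``thin'' at infinity in a way that leaves no room for a diffuse spectrum coexisting with the translation-boundedness coming from FLC; this is where I expect the hardest estimates to live. Showing that all atoms of $\hat{\gamma}$ lie in a single arithmetic progression $b^{-1}\Z$ then follows from uniform discreteness of $S$ combined with a Kronecker/Weyl equidistribution argument, essentially rerunning the rationality step from the proof of \thmref{thmA3} on the Fourier side.
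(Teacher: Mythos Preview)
First, note that the paper does not prove \thmref{thmKL16FLC}; it is quoted from \cite{IK13} and \cite{KL16} and invoked as a black box in the proof of \thmref{thmA3}. So there is no in-paper argument to compare your sketch against --- you are really trying to reconstruct the proof from the cited references.

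Your opening moves are correct: $\psi * \mu = 1$ gives $\hat{\psi}\,\hat{\mu} = \delta_{0}$ in the sense of tempered distributions, hence $\operatorname{supp}\hat{\mu} \subset \{\hat{\psi} = 0\} \cup \{0\}$. The genuine gap is exactly the step you yourself flag as the ``main obstacle''. You propose to rule out a continuous component of $\hat{\gamma}$ on the grounds that $\hat{\psi} \in C_{0}(\R)$ makes $Z = \{\hat{\psi} = 0\}$ ``thin at infinity''. This is false: Riemann--Lebesgue gives only $\hat{\psi}(\xi) \to 0$ and says nothing about the structure of the zero set, which for a general $\psi \in L^{1}(\R)$ can contain intervals, have positive Lebesgue measure, or be essentially any closed subset of $\R$. (Contrast this with the situation inside the proof of \thmref{thmA3}, where the relevant Fourier transform is the restriction of an entire function because $|g|^{2}$ has compact support, and its real zero set \emph{is} discrete; here no such analyticity is available.) Finite local complexity by itself does not force pure-point diffraction either --- there are FLC Delone sequences on the line, such as the Rudin--Shapiro chain, whose diffraction has an absolutely continuous component --- so the tiling hypothesis must enter through something much sharper than $\hat{\psi} \in C_{0}$, and your outline does not supply that ingredient. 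The subsequent ``atoms lie in $b^{-1}\Z$'' step inherits the same problem.

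The proofs in \cite{IK13} and \cite{KL16} do not go through the autocorrelation/diffraction formalism. They use FLC combinatorially: the finitely many gap lengths place (a translate of) $S$ inside a finitely generated subgroup of $\R$, and one then argues according to the rank of that group, feeding the tiling identity into an equidistribution/Kronecker argument to force a nontrivial period of $S$. Your diffraction setup, while natural-looking, is a detour around the mechanism that actually drives the result.
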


It would therefore suffice to show that in our case the set
$S$ has finite local complexity. 
It is easy to verify that  $S$ has  finite local complexity if and only if
$S$ has bounded gaps and the set of differences $S-S$ is
a discrete closed set in $\R$. Since we have  
$D(S)>0$ then $S$ indeed has bounded gaps.
For any fixed $t \in T$, the orthogonality of the system
$\{ g(x-t) e^{2\pi i sx} :  s \in S\}$
implies that the set $(S - S) \setminus \{0\}$ is contained in
the zero set of the Fourier transform of the $L^1(\R)$
function $|g|^2$. But since $|g|^2$ is a compactly
supported function, its Fourier transform is the restriction to
$\R$ of a nonzero entire function and hence the set of its real zeros
is a discrete closed set in $\R$. This establishes that $S$ has
finite local complexity, and thus the periodicity of $S$
follows from \thmref{thmKL16FLC}.
\end{proof}


\section{Application to a conjecture of Liu and Wang}
\label{secR1}

There is an interesting 
 conjecture due to Youming Liu and Yang Wang \cite{LW03}
which 
concerns the structure 
of the Gabor orthonormal bases $\G(g,T,S)$
such that the window
function $g$ has compact support.

To state the conjecture we first recall that
if $\Om \sbt \R^d$ is a set of positive and finite measure  
and if $\Lam \sbt \R^d$ is a countable set, then
$(\Om,\Lam)$ is called a \emph{tiling pair}
if the family of sets $\Omega+\lam$ $(\lam \in \Lam)$
forms a partition of $\R^d$ up to measure zero;
and that
$(\Om,\Lam)$ is called a \emph{spectral pair}
if the  system of exponential functions
\begin{equation}
\label{eqR1.3}
E(\Lam) := \{e^{2\pi i \dotprod{\lam}{x}} : \lam \in \Lam\}
\end{equation}
forms an orthogonal basis in the space $L^2(\Om)$.

\begin{conj}[{\cite{LW03}}]
\label{conj_lw}
Let $\G(g, T, S)$ be an orthonormal basis in $L^2(\R^d)$
such that the function $g$ has compact support. Then
the following three conditions must hold:
\begin{enumerate-num}
\item \label{lwc:i}
$|g(x)| = (\mes  \Om)^{-1/2} \, \1_\Om (x)$ 
a.e.\ for some measurable set $\Om \sbt \R^d$;
\item \label{lwc:ii}
$(\Om,T)$ is a tiling pair;
\item \label{lwc:iii}
$(\Om,S)$ is a spectral pair.
\end{enumerate-num}
\end{conj}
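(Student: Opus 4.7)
The plan is to sketch a proof of the Liu--Wang conjecture under the additional assumption that $g \geq 0$, which is the result obtained in \cite{DL14} that the section promises to reprove. The strategy rests on combining \thmref{thmD1.4.3} with the pairwise orthogonality built into the Gabor ON basis structure, in order to force $g$ to be a multiple of the indicator of a set $\Om$.

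First I would extract two pieces of information. From \thmref{thmD1.4.3} we already have $\sum_{t \in T} g(x-t)^2 = D(T)$ almost everywhere. Separately, orthogonality of the Gabor functions $g(x-t)e^{2\pi i \dotprod{s}{x}}$ and $g(x-t')e^{2\pi i \dotprod{s}{x}}$ for distinct $t,t' \in T$ and a fixed $s \in S$ collapses, since $g$ is real, to $\int g(x-t) g(x-t')\,dx = 0$. Because $g \geq 0$, this forces $g(x-t)$ and $g(x-t')$ to have essentially disjoint supports. Letting $\Om := \{x : g(x) > 0\}$, the translates $\Om + t$ ($t \in T$) are therefore pairwise disjoint up to measure zero.

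Combining these two facts, for a.e.\ $x \in \R^d$ exactly one summand in $\sum_t g(x-t)^2$ is nonzero, so that summand alone equals $D(T)$. Hence $g$ takes the constant value $\sqrt{D(T)}$ on $\Om$ and vanishes elsewhere. This gives condition \ref{lwc:i} once we check the normalization, and the tiling identity becomes $\sum_{t \in T} \1_{\Om+t}(x) = 1$ a.e., which is condition \ref{lwc:ii}; the tiling relation $D(T)\,\mes(\Om) = 1$ reconciles the two formulas and yields exactly $g = (\mes \Om)^{-1/2} \1_\Om$.

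For condition \ref{lwc:iii} I would exploit that the Gabor basis functions are now supported on the disjoint pieces $\Om + t$ of the tiling. Under the orthogonal decomposition $L^2(\R^d) = \bigoplus_{t \in T} L^2(\Om + t)$, the ON basis property of $\G(g,T,S)$ decouples into the assertion that, for each $t$, the family $\{(\mes \Om)^{-1/2} e^{2\pi i \dotprod{s}{x}}\1_{\Om+t}(x) : s \in S\}$ is an orthonormal basis of $L^2(\Om + t)$. Translating by $-t$ (which only multiplies each basis element by a unimodular constant) transfers this to an orthonormal basis $\{(\mes \Om)^{-1/2} e^{2\pi i \dotprod{s}{y}} : s \in S\}$ of $L^2(\Om)$, i.e.\ $(\Om,S)$ is a spectral pair. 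The argument is quite short once nonnegativity is imposed; the main obstacle, and the reason the full conjecture remains open, is precisely the step $\int g(x-t)g(x-t')\,dx = 0 \Rightarrow$ disjoint supports, which genuinely uses the sign condition $g \geq 0$ and has no obvious substitute for a general complex-valued window.
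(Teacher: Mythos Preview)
Your proposal correctly restricts to the nonnegative case, which is exactly what the paper proves as \thmref{thmD7.1} (the full conjecture remains open), and your argument follows essentially the same route as the paper's: use the orthogonality of time shifts together with $g \geq 0$ to get disjoint supports, combine with the tiling identity \eqref{eqD1.4.1} from \thmref{thmD1.4.3} to force $|g|$ constant on $\Om$ and obtain \ref{lwc:i} and \ref{lwc:ii}, then deduce \ref{lwc:iii} by observing that the Gabor basis decouples over the tiling pieces $\Om+t$. The only cosmetic difference is that you phrase the last step via the orthogonal decomposition $L^2(\R^d)=\bigoplus_t L^2(\Om+t)$, whereas the paper argues directly that the weighted exponentials form an orthonormal basis of $L^2(\Om+t)$ and then strips off the constant-modulus weight.
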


It is easy to verify, see e.g.\ \cite[Lemma 3.1]{LW03}, that 
any Gabor system   $\G(g, T, S)$ satisfying the
conditions  \ref{lwc:i}, \ref{lwc:ii} and \ref{lwc:iii} above 
forms an orthonormal basis
in $L^2(\R^d)$. The 
Liu-Wang conjecture asserts that 
if $g$ has compact support, then
these sufficient
conditions are also necessary for the system
 $\G(g, T, S)$ to be an orthonormal basis.

\conjref{conj_lw} is known to hold
in several special cases.  In \cite{Liu01}
the conjecture was proved in dimension one
and in the case where $T=S=\Z$. A similar result
is true also in $\R^d$. In
\cite[Theorem 1.2]{LW03}
the conjecture was proved in 
dimension one without
any assumption on the structure of the sets
$T$ and $S$ but assuming that
 the set $\Om = \{x : g(x) \neq 0\}$ 
 coincides a.e.\ with an interval in $\R$.
In \cite{DL14} 
the conjecture was proved in the special case
where $g \in L^2(\R^d)$ is a nonnegative function
(in this special case, the conjecture is true
even if $g$ is not  assumed to have
compact support).

\thmref{thmD1.4.3} allows us to give a new
proof of the latter result, namely:

\begin{thm}[{\cite{DL14}}]
\label{thmD7.1}
Let $g$ be a nonnegative function in $L^2(\R^d)$. If
$\G(g, T, S)$ is an orthonormal basis,
then the conclusions \ref{lwc:i}, \ref{lwc:ii} and \ref{lwc:iii} 
in \conjref{conj_lw} hold.
\end{thm}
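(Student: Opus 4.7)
The plan is to combine the functional tiling identities of \thmref{thmD1.4.3} with the nonnegativity assumption $g \geq 0$ to force $g$ to be a constant multiple of an indicator function, after which the three conclusions of \conjref{conj_lw} essentially read themselves off.

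First I would exploit the orthogonality of pure translates inside the Gabor system. Fixing any $s \in S$ and taking distinct $t, t' \in T$, the Gabor orthogonality relation reduces to
\[
\int_{\R^d} g(x-t) \, g(x-t') \, dx = 0.
\]
Since $g$ is real-valued and nonnegative, the integrand itself vanishes almost everywhere, so the translates of $\Omega := \{x : g(x) > 0\}$ by elements of $T$ are essentially pairwise disjoint. On the other hand, \thmref{thmD1.4.3} gives $\sum_{t \in T} |g(x-t)|^2 = D(T) > 0$ a.e., so for almost every $x$ some translate $\Omega + t$ must cover $x$. Combining disjointness with covering yields that $(\Omega, T)$ is a tiling pair, and at a.e.\ $x \in \Omega$ exactly one term in the sum is nonzero, forcing $g(x)^2 = D(T)$ there. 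Since the Gabor basis elements are unit vectors we have $\|g\|_2 = 1$, so $D(T) \mes \Omega = 1$, and conclusion \ref{lwc:i} ($g = (\mes \Omega)^{-1/2} \1_\Omega$) together with \ref{lwc:ii} are both established.

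For \ref{lwc:iii}, the orthogonality of the exponentials $E(S)$ in $L^2(\Omega)$ is immediate from the Gabor orthogonality with $t = t'$ and $s \neq s'$, which, using the explicit form of $g$, reduces to $\int_\Omega e^{2\pi i (s-s')x}\,dx = 0$. The remaining task is completeness, and for this I would invoke the Fourier-side identity $\sum_{s \in S} |\ft{g}(\xi - s)|^2 = D(S)$ from \thmref{thmD1.4.3}. Substituting $\ft{g} = (\mes \Omega)^{-1/2} \ft{\1_\Omega}$ and using $D(S) = 1/D(T) = \mes \Omega$ (from \eqref{eqD1.4.9}) rewrites this as
\[
\sum_{s \in S} |\ft{\1_\Omega}(\xi - s)|^2 = (\mes \Omega)^2 \quad \text{a.e.},
\]
which is precisely Parseval's identity for the normalized system $\{(\mes \Omega)^{-1/2} e^{2\pi i s x}\}_{s \in S}$ evaluated on the test functions $e^{2\pi i \xi x}|_\Omega$. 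Since orthonormality supplies an automatic Bessel bound of $1$ and the linear span of pure exponentials is dense in $L^2(\Omega)$, a standard density argument extends Parseval to all $f \in L^2(\Omega)$; combined with the already verified orthogonality, this makes $E(S)$ an orthogonal basis of $L^2(\Omega)$, i.e.\ $(\Omega, S)$ is a spectral pair.

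The conceptual core of the argument (and essentially the only step requiring insight) is the initial observation that nonnegativity of $g$ upgrades the purely $L^2$ orthogonality of translates into pointwise essentially disjoint supports. Once this is in hand, the tiling identity of \thmref{thmD1.4.3} leaves $g$ no freedom beyond being constant on the tile, and both the tiling and spectral conclusions of \conjref{conj_lw} follow as direct rewritings of the two identities provided by \thmref{thmD1.4.3}.
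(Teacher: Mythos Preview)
Your argument is correct. The derivation of \ref{lwc:i} and \ref{lwc:ii} is essentially identical to the paper's: nonnegativity turns orthogonality of translates into essential disjointness of the supports $\Omega+t$, and then the tiling identity \eqref{eqD1.4.1} forces $|g|^2 \equiv D(T)$ on $\Omega$ and makes $(\Omega,T)$ a tiling pair.

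The proof of \ref{lwc:iii}, however, differs from the paper's. You invoke the second identity \eqref{eqD1.4.2} of \thmref{thmD1.4.3}, rewrite it as Parseval's equality for the orthonormal system $\{(\mes\Omega)^{-1/2}e^{2\pi i\dotprod{s}{x}}\}_{s\in S}$ tested against the exponentials $e^{2\pi i\dotprod{\xi}{x}}\1_\Omega$, and then pass to all of $L^2(\Omega)$ by density. This is sound (each $e_\xi$ for which Parseval holds lies in the closed span of the $\phi_s$, and these $e_\xi$ already span a dense subspace). The paper instead avoids \eqref{eqD1.4.2} altogether: once $(\Omega,T)$ tiles, $L^2(\R^d)$ splits orthogonally as $\bigoplus_{t\in T} L^2(\Omega+t)$, so for each fixed $t$ the family $\{g(x-t)e^{2\pi i\dotprod{s}{x}}\}_{s\in S}$ must itself be an orthonormal basis of $L^2(\Omega+t)$; dividing out the constant-modulus weight $g(x-t)$ gives the spectral pair directly. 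The paper's route is shorter and uses only one half of \thmref{thmD1.4.3}; your route has the pleasant feature of exhibiting \ref{lwc:ii} and \ref{lwc:iii} as the respective ``readings'' of the two identities \eqref{eqD1.4.1} and \eqref{eqD1.4.2}.
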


\begin{proof}
Let $\Om = \{x : g(x) \neq 0\}$.
Since $g$ is a nonnegative function, the orthogonality of the system
$\{ g(x-t) e^{2\pi i \dotprod{s}{x}} :  t \in T\}$
where $s$ is any fixed 
 element of $S$, 
implies that the sets $\Omega+t$ $(t \in T)$
must be pairwise disjoint up to measure zero.
(This is the only place in the proof where the nonnegativity of $g$
is in fact used; the rest of the argument is valid for an arbitrary $g$.)
But the sets $\Omega+t$ are respectively the supports
of the  functions $|g(x-t)|^2$, $t \in T$, so
it follows from the tiling condition
\eqref{eqD1.4.1} that
$(\Om,T)$ is a tiling pair
and 
that $|g(x)|^2 = D(T)$
a.e.\ on $\Om$.
In turn this implies that
$D(T) \mes(\Om) = \int |g(x)|^2 dx = 1$
and hence  \ref{lwc:i} and \ref{lwc:ii} are established.

The fact that 
$(\Om,T)$ is a tiling pair
implies that for any fixed $t \in T$,
the system
$\{g(x-t) e^{2 \pi i \dotprod{s}{x}} : s \in S\}$
 is an orthonormal basis in the space $L^2(\Om+t)$.
But since the weight function
$g(x-t)$ has constant modulus a.e.\ on its support $\Om+t$, 
this implies that
also the
unweighted system of exponentials  $E(S)$ must 
 be an orthogonal  basis  in
 $L^2(\Om + t)$. This establishes \ref{lwc:iii}
and completes the proof.
\end{proof}

We conclude the paper by mentioning the references
\cite{GLW15}, \cite{CL18}, \cite{IM18}, \cite{LM19}, \cite{AAK20}
where other questions in the spirit of the Liu-Wang
conjecture are studied. Similar questions in 
the finite group setting are considered in
\cite{IKLMP21}.


\end{document}